\newtheorem{theorem}{Theorem}[section]
\newtheorem{lemma}{Lemma}[section]
\newtheorem{definition}{Definition}[section]
\numberwithin{equation}{section}
\numberwithin{table}{section}
\numberwithin{figure}{section}
\title{Perrin numbers that are palindromic concatenations of two repdigits}
\author{Herbert Batte$^{1} $ and Prosper Kaggwa$^{2,*} $}
\date{}
\begin{document}
	\maketitle
\abstract{ Let $ \{P_n\}_{n\geq 0} $ be the sequence of Perrin numbers defined by $P_0=3$, $P_1=0$,$P_2=2$ and $P_{n+3}=P_{n+1}+P_{n}$ for all $n \geq 0$. In this paper, we determine all Perrin numbers that are palindromic concatenations of two repdigits. } 

{\bf Keywords and phrases}: Perrin numbers; linear forms in logarithms; Repdigits; Baker--Davenport reduction method.
 
{\bf 2020 Mathematics Subject Classification}: 11B39, 11D61, 11J86

\thanks{$ ^{*} $ Corresponding author}

\section{Introduction}
\subsection{Background}
Consider the sequence of Perrin numbers $\{P_n\}_{n\geq 0}$, defined with the initial values $P_0 = 3$, $P_1 = 0$, $P_2 = 2$, and following the recurrence relation $P_{n+3} = P_{n+1} + P_n$ for all $n \geq 0$. The beginning of this sequence is:
$$
3,\, 0,\, 2,\, 3,\, 2,\, 5,\, 5,\, 7,\, 10,\, 12,\, 17,\, 22,\, 29,\, 39,\, 51, \ldots.
$$
A \textit{repdigit} in base 10 is a positive integer $N$ composed of repeated occurrences of a single digit. Specifically, $N$ can be expressed as:
\[
N = \underbrace{\overline{d \cdots d}}_{\ell \text{ times}} = d \left( \frac{10^\ell - 1}{9} \right),
\]
where $d$ and $\ell$ are positive integers, with $0 \leq d \leq 9$ and $\ell \geq 1$. 

This work builds upon prior research on the Diophantine properties of sequences generated by recurrence relations, specifically examining terms that can be represented within the sequence or as combinations of sequence terms. Despite the extensive work by Luca and Banks \cite{banks}, their results provided limited insights into the frequency of such terms within sequences. The problem of identifying Fibonacci numbers formed by two repdigits was investigated in \cite{ala}, revealing the largest such number as \(F_{14} = 377\).

Further investigations into the connection between linear recurrence sequences and repdigits have been carried out. For example, \cite{gar} determined all repdigits formed by adding two Padovan numbers. Ddamulira expanded this by exploring Padovan numbers that are concatenations of two distinct repdigits, identifying \(P_{21} = 200\) as the largest such number in \cite{dda2}.

Additional significant contributions to this area include works by Bedná\v rik and Trojovská \cite{bed}, Boussayoud et al. \cite{bou}, and Bravo and Luca \cite{bravo}. Among these, \cite{er} identified the only Perrin numbers that are concatenations of two distinct repdigits as \( P_n \in \{10, 12, 17, 22, 29, 39, 51, 68, 90, 119, 227, 644\} \).

This result from \cite{er} was derived using different methodologies. A logical extension of this research would be to identify Perrin numbers that are palindromic concatenations of two repdigits. A number is considered a \textit{palindrome} if it reads the same forwards and backwards. As a preliminary step, we focus on a more specific Diophantine equation:
\begin{align}\label{eq1.1l}
	P_n = \overline{\underbrace{d_1 \ldots d_1}_{\ell \text{ times}}\underbrace{d_2 \ldots d_2}_{m \text{ times}}\underbrace{d_1 \ldots d_1}_{\ell \text{ times}}},
\end{align} 
where $d_1, d_2 \in \{0, 1, 2, \ldots, 9\}$ and $d_1 > 0$. Similar investigations, such as in \cite{chal}, have proven that 151 and 616 are the only Padovan numbers that are palindromic concatenations of two distinct repdigits.

Here, we present the following result.
\subsection{Main Results}\label{sec:1.2l}
\begin{theorem}\label{thm1.1l} 
	22 is the only Perrin number that is a palindromic concatenation of two repdigits.
\end{theorem}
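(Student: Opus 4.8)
The plan is to treat \eqref{eq1.1l} by the now-standard combination of Baker's method and the Baker--Davenport reduction, the general notion of a palindromic concatenation of two repdigits reducing --- up to the plain-repdigit sub-case, settled by the known classification of Perrin repdigits --- to this equation. First I would put the right-hand side of \eqref{eq1.1l} in closed form: writing $L = 2\ell + m$ for the number of decimal digits of $P_n$, one gets
$$9P_n = d_1 10^{L} + (d_2 - d_1)10^{\ell + m} + (d_1 - d_2)10^{\ell} - d_1 .$$
Invoking the Binet-type formula $P_n = \alpha^n + \beta^n + \gamma^n$, where $\alpha \approx 1.3247$ is the real root of $x^3 - x - 1 = 0$ and $\beta,\gamma$ are the complex roots with $|\beta| = |\gamma| = \alpha^{-1/2} < 1$ (so $|P_n - \alpha^n| < 1$ for $n$ not too small), and combining the elementary bounds $\alpha^{n-2} \le P_n \le \alpha^{n+1}$ with $10^{L-1} \le P_n < 10^{L}$, one sees that $n$ and $L$ are of the same order of magnitude ($L \approx 0.122\,n$) --- the mechanism that will later turn bounds on $\ell$ and $m$ into a bound on $n$. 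Small values of $n$ are checked directly, so henceforth $n$ may be taken large.

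Next come two applications of Matveev's theorem on linear forms in logarithms. Peeling off the dominant term in the closed form gives $|9\alpha^n - d_1 10^{L}| \ll 10^{\ell + m}$, hence
$$\left| \frac{9\alpha^n}{d_1\,10^{L}} - 1 \right| \ll 10^{-\ell} ;$$
applying Matveev to the form $n\log\alpha - L\log 10 + \log(9/d_1)$, whose three algebraic entries all have absolutely bounded height, yields $\ell \ll \log n$. Peeling off the next term and setting $A = d_1 10^{\ell} + d_2 - d_1$, one gets
$$\left| \frac{9\alpha^n}{A\,10^{\ell + m}} - 1 \right| \ll 10^{-m} ;$$
here the rational number $9/A$ has height $\ll 10^{\ell}$, which is only polynomially bounded in $n$, so Matveev this time produces only $m \ll (\log n)^2$. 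Since $2\ell + m = L$ and $L \gg n$, the chain $n \ll \ell + m \ll (\log n)^2$ forces an explicit (huge) bound $n < N_0$.

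The bound $N_0$ is far too large for a direct search, so the final part is reduction. Taking $\log\alpha / \log 10$ as the irrational to be approximated, I would feed the first linear form --- over $d_1 \in \{1,\dots,9\}$ and $n < N_0$ --- into the Baker--Davenport lemma (or its LLL-based analogue) to replace $\ell \ll \log N_0$ by a small explicit bound $\ell \le \ell_0$; then, over the now finite set of triples $(d_1,d_2,\ell)$, the same procedure applied to the second linear form gives $m \le m_0$. At that stage $L = 2\ell + m$ is bounded by a small constant, so $\alpha^{n-2} \le P_n < 10^{L}$ pins $n$ down to a short range, and a finite computation over the remaining tuples $(n,\ell,m,d_1,d_2)$ leaves $22$ as the unique solution.

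The main obstacle, I expect, is the second linear form and everything downstream of it: because the coefficient $A$ absorbs a power of $10$, its height grows with $\ell$ --- hence with $\log n$ --- which degrades Matveev's lower bound to the $(\log n)^2$ scale and, more annoyingly, turns the corresponding reduction into a family of reductions indexed by the finitely many values of $\ell$ surviving the first step (one may also need to iterate the reductions to bring the bounds down far enough). Keeping every constant explicit through this nest, so that $N_0$, $\ell_0$, $m_0$ are genuinely computable and the closing search is of feasible size, is where the real effort lies; the rest is routine bookkeeping and a machine check.
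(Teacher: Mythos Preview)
Your plan is correct and matches the paper's template for the first two steps: the same closed form for $9P_n$, the same two linear forms (your $9\alpha^n/(d_1 10^{L})-1$ and $9\alpha^n/(A\,10^{\ell+m})-1$ are the paper's $\Gamma$ and $\Gamma_1$), Matveev applied to each to get $\ell \ll \log n$ and then $m \ll (\log n)^2$, followed by a Baker--Davenport reduction of each in turn.

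The one genuine difference is how $n$ itself gets bounded. The paper introduces a \emph{third} linear form --- peeling off everything but the constant $-d_1$, so that the remaining expression $\Gamma_2$ satisfies $|\Gamma_2|\ll\alpha^{-n}$ --- applies Matveev a third time to obtain $n\ll(\log n)^3$, and then runs a third reduction over the now-finite set of $(\ell,m,d_1,d_2)$ to push $n$ below the threshold already checked directly. You instead exploit the elementary relation $n\asymp L=2\ell+m$: once $\ell$ and $m$ are bounded, so is $L$, hence so is $n$, and a short final search finishes. Your route is shorter by one Matveev application and one reduction; the price is only that the terminal bound on $n$ is a bit larger (with $\ell\le 54$, $m\le 57$ one gets $L\le 165$ and $n\lesssim 1400$, whereas the paper's third reduction drives $n$ below $700$ and closes by contradiction), so the residual search is still trivial.
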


\section{Methods}
\subsection{Preliminaries}
For all $n \ge 0$, the Binet formula for the Perrin numbers tells us that the $n^\text{th}$ Perrin number is given by
\begin{equation}
P_n = \alpha^n + \beta^n + \gamma^n, \label{eq:binet}
\end{equation}
where
\begin{equation*}
\begin{aligned}
\alpha &= \frac{r_1 + r_2}{6},\qquad & \beta &= \frac{-(r_1 + r_2) + i(\sqrt{3}(r_1 - r_2))}{6} \qquad\text{and}\qquad & \gamma &= \overline{\beta},
\end{aligned}
\end{equation*}
with
\begin{align*}
r_1 &= \frac{31 + \sqrt{69}}{8} \qquad\text{and}\qquad r_2 = \frac{31 - \sqrt{69}}{8}.
\end{align*}
Moreover, the characteristic equation for the Perrin sequence is given by $ \phi(x) = x^3 - x - 1 = 0 $
with zeros $\alpha$, $\beta$ and $\gamma$ as defined above. Numerically, the following estimates hold for the quantities $\{\alpha, \beta, \gamma\}$:
\begin{equation*}
1.32 < \alpha < 1.33, 
\end{equation*}
\begin{equation*}
0.86 < |\beta| = |\gamma| = \alpha^ {- \frac{1}{2}} < 0.87. 
\end{equation*}
Here we can see that the complex conjugate roots $\beta$ and $\gamma$ have minor contributions to the right--hand side of  \eqref{eq:binet}. More specifically, if
\[
e(n) := P_n - \alpha^n = \beta^n + \gamma^n,
\]
then 
\begin{equation*}
|e(n)| < \frac{3}{\alpha^{n/2}} \quad \text{for all } n \ge 0. \label{eq:error_bound}
\end{equation*}
Next, the following estimate also holds.
\begin{lemma}\label{lem1}
Let $n \ge 2$ be a positive integer, then 
\begin{equation*}
	\alpha^{n-2} \le P_n \le \alpha^{n+1}. \label{eq:lemma1}
\end{equation*}	
\end{lemma}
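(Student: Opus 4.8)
The plan is to prove both inequalities at once by (strong) induction on $n$, using the recurrence $P_{n+3}=P_{n+1}+P_n$ together with the fact that $\alpha$ is a root of $\phi(x)=x^3-x-1$, so that $\alpha^3=\alpha+1$. This one algebraic identity is exactly what makes the induction close with no slack: it gives $\alpha^{n-1}+\alpha^{n-2}=\alpha^{n-2}(\alpha+1)=\alpha^{n-2}\alpha^3=\alpha^{n+1}$, and likewise $\alpha^{n-4}+\alpha^{n-5}=\alpha^{n-5}(\alpha+1)=\alpha^{n-2}$. One could instead try to read the bounds off the Binet formula \eqref{eq:binet} and the error estimate $|e(n)|<3\alpha^{-n/2}$ for large $n$ and check small $n$ by hand, but the recurrence-based induction is cleaner and uniform in $n$.

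First I would dispose of the base cases. Since the recurrence steps by $3$, I need the statement for three consecutive indices, say $n=2,3,4$. Using $P_2=2$, $P_3=3$, $P_4=2$ and $1.32<\alpha<1.33$, one checks $\alpha^{0}=1\le P_2=2\le\alpha^{3}=\alpha+1$, then $\alpha^{1}\le P_3=3\le\alpha^{4}=\alpha^2+\alpha$, and finally $\alpha^{2}\le P_4=2\le\alpha^{5}=\alpha^3+\alpha^2$; the only mildly tight inequality is $P_3=3\le\alpha^4\approx 3.08$, which still holds comfortably. For the inductive step, suppose $n\ge 5$ and that $\alpha^{k-2}\le P_k\le\alpha^{k+1}$ holds for all integers $k$ with $2\le k<n$. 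Applying this to $k=n-2$ and $k=n-3$ (both at least $2$) and using $P_n=P_{n-2}+P_{n-3}$: for the lower bound, $P_n\ge\alpha^{n-4}+\alpha^{n-5}=\alpha^{n-5}\alpha^3=\alpha^{n-2}$; for the upper bound, $P_n\le\alpha^{n-1}+\alpha^{n-2}=\alpha^{n-2}\alpha^3=\alpha^{n+1}$. This closes the induction and proves the lemma.

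I expect essentially no serious obstacle here: the argument is a routine two-sided induction. The only points needing a little care are (i) choosing the exponents $n-2$ and $n+1$ in the statement so that the identity $\alpha^3=\alpha+1$ propagates both bounds exactly through the step-$3$ recurrence, and (ii) checking the handful of base cases, in particular the slightly tight $P_3=3\le\alpha^4$.
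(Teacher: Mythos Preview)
Your proposal is correct and is exactly the approach the paper indicates: a straightforward strong induction on $n$ using $P_n=P_{n-2}+P_{n-3}$ together with the identity $\alpha^3=\alpha+1$ from the characteristic polynomial, with the base cases $n=2,3,4$ checked numerically. The paper merely sketches this in one sentence, so your write-up is in fact more detailed than the original.
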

\begin{proof}
	The proof of Lemma \ref{lem1} follows from a simple inductive argument, and the fact that $\alpha^3=\alpha+1$, from the
	characteristic polynomial $\phi$.
\end{proof}
Additionally, we note that \eqref{eq1.1l} can be written as 
\begin{equation}\label{eq:pn_value}
	P_n = \frac{1}{9} \left( d_1 \cdot 10^{2\ell + m} - (d_1 - d_2) \cdot 10^{\ell + m} + (d_1 - d_2) \cdot 10^\ell - d_1 \right), 
\end{equation}
so that
	\begin{equation*}
		\alpha^{n+1} \ge P_n = \frac{1}{9} \left( d_1 \cdot 10^{2\ell + m} - (d_1 - d_2) \cdot 10^{\ell + m} + (d_1 - d_2) \cdot 10^\ell - d_1 \right) > 10^{2\ell + m},
	\end{equation*}
or $\alpha^{n+1} > 10^{2\ell + m - 1} $. Taking logs on both sides yields 
\begin{align}
	2\ell +m-2<n.
\end{align}
Let $K := \mathbb{Q}(\alpha, \beta)$ be the splitting field of the polynomial $\phi$ over $\mathbb{Q}$. Then $[K: \mathbb{Q}] = 6$ and $[\mathbb{Q}(\alpha) : \mathbb{Q}] = 3$. We note that the Galois group of $K/\mathbb{Q}$ is given by

\[
\mathcal{G} := \mathrm{Gal}(K/\mathbb{Q}) \cong \{(1), (\alpha\beta), (\alpha\gamma), (\beta\gamma), (\alpha\beta\gamma)\} \cong S_3.
\]

We therefore identify the automorphisms of $\mathcal{G}$ with the permutation group of the zeroes of $\phi$. We highlight the permutation $(\alpha\beta)$, corresponding to the automorphism $\sigma: \alpha \mapsto \beta, \beta \mapsto \alpha, \gamma \mapsto \gamma$, which we use later to obtain a contradiction on the size of the absolute value of a certain bound.
\subsection{Linear Forms in Logarithms}
We use three times Baker--type lower bounds for nonzero linear forms in three logarithms of algebraic numbers. There are many such bounds mentioned in the literature like that of Baker and W\"ustholz from \cite{BW} or Matveev from \cite{matl}. Before we can formulate such inequalities, we need the notion of height of an algebraic number recalled below.

\begin{definition}[Logartihmic height]
	Let $\gamma$ be an algebraic number of degree $d$ with minimal primitive polynomial over the integers, given by
	\[
	a_0 x^d + a_1 x^{d-1} + \cdots + a_d = a_0 \prod_{i=1}^d (x - \gamma^{(i)}),
	\]
	where the leading coefficient $a_0$ is positive. Then the logarithmic height of $\gamma$ is given by 
	\begin{equation*}
	h(\gamma) := \frac{1}{d} \left( \log a_0 + \sum_{i=1}^d \log \max \{ |\gamma^{(i)}|, 1 \} \right). \label{eq:log_height}
	\end{equation*}
\end{definition} 
In particular, if $\gamma$ is a rational number represented as $\gamma := p/q$ with coprime integers $p$ and $q \ge 1$, then 
\[
h(\gamma) = \log \max \{ |p|, q \}.
\]
The following properties of the logarithmic height function $h(\cdot)$ will be used in the rest of the paper without further reference:
\begin{align*}
	h(\gamma_1 \pm \gamma_2) &\le h(\gamma_1) + h(\gamma_2) + \log 2,\\
h(\gamma_1 \gamma_2^{\pm 1}) &\le h(\gamma_1) + h(\gamma_2),\\
h(\gamma^s) &= |s| h(\gamma) \quad \text{valid for } s \in \mathbb{Z}.	
\end{align*}

A linear form in logarithms is an expression
\begin{align}\label{eqn:LinearFormsInLogarithms}
\Lambda := b_1 \log \gamma_1 + \cdots + b_t \log \gamma_t ,
\end{align}
where for us $\gamma_1, \ldots, \gamma_t$ are positive real algebraic numbers and $b_1, \ldots, b_t$ are non--zero integers. We assume $\Lambda \neq 0$. We need lower bounds for $|\Lambda|$. We write $K := \mathbb{Q}(\gamma_1, \ldots, \gamma_t)$ and $D$ for the degree of $K$. We start with the general form due to Matveev in  \cite{matl}.

\begin{theorem}[Matveev, \cite{matl}]\label{log}
Put $\Gamma := \gamma_1^{b_1} \cdots \gamma_t^{b_t} - 1 = e^\Gamma - 1$. Assume $\Gamma \neq 0$. Then 
\begin{equation*}
\log |\Gamma| > -1.4 \cdot 30^{t+3} \cdot t^{4.5} \cdot D^2 (1 + \log D) (1 + \log B) A_1 \cdots A_t, \label{eq:matveev}
\end{equation*}
where $B \ge \max \{|b_1|, \ldots, |b_t|\}$ and $A_i \ge \max \{ D h(\gamma_i), |\log \gamma_i|, 0.16 \}$ for all $i = 1, \ldots, t$.
\end{theorem}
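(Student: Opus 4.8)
The plan is to read Theorem \ref{thm1.1l} as a Diophantine equation in the unknowns $n,\ell,m,d_1,d_2$ through the identity \eqref{eq:pn_value}, and to bound $n$ so sharply that the finitely many survivors can be settled by machine. First I would dispose of small indices directly: a short computation lists every Perrin number of small length and confirms that $22 = P_{11}$ is the only term of palindromic shape in that range. Hence I may assume that $n$ (equivalently $2\ell+m$, via the earlier inequality $2\ell+m-2<n$) is large, which legitimizes all the asymptotic estimates below.

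For the main range I combine the Binet expansion \eqref{eq:binet}, written as $P_n=\alpha^n+e(n)$ with $|e(n)|<3\alpha^{-n/2}$, with \eqref{eq:pn_value}. The leading term on the right of \eqref{eq:pn_value} is $d_1\cdot 10^{2\ell+m}/9$, so the first step compares it with $\alpha^n$: collecting the lower-order terms gives $|9\alpha^n-d_1\cdot 10^{2\ell+m}|\ll 10^{\ell+m}$, and dividing by $d_1\cdot 10^{2\ell+m}$ produces
\begin{equation*}
\Lambda_1 := 9\,\alpha^n\,d_1^{-1}\,10^{-(2\ell+m)}-1,\qquad |\Lambda_1|\ll 10^{-\ell}.
\end{equation*}
Applying Theorem \ref{log} with $t=3$, arguments $(\gamma_1,\gamma_2,\gamma_3)=(\alpha,10,9/d_1)$, and $B$ of order $n$ (valid since $2\ell+m<n+2$) yields $\log|\Lambda_1|>-C(1+\log n)$, and comparing with the upper bound forces $\ell\ll\log n$. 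The one caveat is $\Lambda_1\neq 0$, which I verify with the highlighted automorphism $\sigma\colon\alpha\mapsto\beta$: if $9\alpha^n=d_1\cdot 10^{2\ell+m}$, then applying $\sigma$ (which fixes the rational right-hand side) would give $9|\beta|^n=9\alpha^{-n/2}=d_1\cdot 10^{2\ell+m}$, impossible for large $n$ since the left side is tiny and the right side enormous.

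I then repeat the device, peeling off the middle block: writing $d_1\cdot 10^{2\ell+m}-(d_1-d_2)10^{\ell+m}=10^{\ell+m}\bigl(d_1 10^\ell-(d_1-d_2)\bigr)$ gives a second linear form of size $\ll 10^{-\ell-m}$ whose three arguments are $\alpha$, $10$, and the rational number $\bigl(d_1 10^\ell-(d_1-d_2)\bigr)/9$, with a third analogous application isolating a clean two-term relation between $n$ and $2\ell+m$. Here lies the main obstacle: this rational argument has logarithmic height of order $\ell$, so Matveev's bound carries an extra factor growing with $\ell$. The saving point is that Step 1 has already pinned $\ell=O(\log n)$, so this factor is only $O(\log n)$; the estimate stays polynomial in $\log n$ and, chained with the previous inequalities, collapses to an absolute (astronomically large) upper bound $n<N_0$.

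Finally I reduce $N_0$ to a checkable size. From Step 1, $\ell$ is bounded by an explicit constant, so $\ell\in\{1,\dots,L_0\}$ and $(d_1,d_2)$ range over finitely many digit pairs; fixing each such choice turns the surviving exponentially small forms into two-variable forms $|n\log\alpha-k\log 10+\log(\cdot)|<c\cdot 10^{-\ell-m}$ in $n$ and $m$. Running the Baker--Davenport / Dujella--Peth\H{o} reduction on each (or an LLL reduction where a genuine three-term form remains) lowers the bound on $n$ from $N_0$ to a small explicit threshold in one or two passes, after which a direct scan of the Perrin sequence leaves $P_{11}=22$ as the only solution. I expect the second linear form to be the crux — balancing the height of the integer block $d_1 10^\ell-(d_1-d_2)$ against the smallness of the form — together with confirming that \emph{every} admissible triple $(\ell,d_1,d_2)$ survives the reduction rather than only a representative case.
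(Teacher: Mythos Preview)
Your proposal does not address the statement you were asked to prove. Theorem~\ref{log} is Matveev's lower bound for linear forms in logarithms, a deep result in transcendental number theory that the paper merely \emph{cites} from \cite{matl} and does not attempt to prove; its proof requires the full machinery of Baker's method (auxiliary interpolation determinants, zero estimates, extrapolation) and is entirely disjoint from anything in this paper. What you have written is instead a sketch of the proof of Theorem~\ref{thm1.1l}, the main result on Perrin palindromes, which \emph{uses} Theorem~\ref{log} as a black box at each of the three steps you describe.

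Read as a plan for Theorem~\ref{thm1.1l}, your outline is essentially the paper's own argument: three successive applications of Matveev to the forms obtained by peeling off $10^{2\ell+m}$, then $10^{\ell+m}$, then $10^{\ell}$, with the Galois automorphism $\sigma$ ruling out vanishing, followed by Dujella--Peth\H{o} reduction. Two small discrepancies: your second form is stated to be of size $\ll 10^{-\ell-m}$, whereas the paper's second form (and yours, once you divide correctly) is only $\ll 10^{-m}$, yielding a bound on $m$ rather than on $\ell+m$; and your description of the third step as ``isolating a clean two-term relation between $n$ and $2\ell+m$'' is off --- the paper's third form has $\alpha^{-n}\cdot 10^{\ell}$ against a rational of height $O(\ell+m)$, giving the final bound on $n$ directly. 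Neither slip is fatal to the strategy, but neither brings you any closer to a proof of Matveev's theorem itself.
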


\subsection{Reduction Methods}

Typically, the estimates from Matveev's theorem are excessively large to be practical in computations. To refine these estimates, we employ a modified approach based on the Baker--Davenport reduction method. Our adaptations follow the method introduced by Dujella and Petho (\cite{duj}, Lemma 5). When considering a real number $r$, we use $\| r \|$ to represent the smallest distance between $r$ and any nearest integer which is formally written as $\min \{ |r - n| : n \in \mathbb{Z} \}$.
\begin{lemma}[Dujella \& Petho, \cite{duj}]\label{duj}
Let $\tau \neq 0$ and $A, B, \mu$ be real numbers with $A > 0$ and $B > 1$. Let $M > 1$ be a positive integer and suppose that $p/q$ is a convergent of the continued fraction expansion of $\tau$ with $q > 6M$. Let 
\[
\varepsilon := ||\mu q|| - M \|| \tau q \||.
\]
If $\varepsilon > 0$, then there is no solution of the inequality 
\begin{equation*}
0 <| m \tau - n + \mu |< A B^{-k},
\end{equation*}
in positive integers $m, n, k$ with 
\begin{equation*}
\frac{\log (Aq/\varepsilon)}{\log{B}} \ge k \quad \text{and} \quad m < M.
\end{equation*}
\end{lemma}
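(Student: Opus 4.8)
The plan is to prove the contrapositive by an elementary argument resting on a single fact from the theory of continued fractions, namely that the numerator of a convergent is the nearest integer to the corresponding multiple of $\tau$. Concretely, I would suppose that positive integers $m, n, k$ form a solution of $0 < |m\tau - n + \mu| < AB^{-k}$ with $m < M$, abbreviate the linear form by $\Lambda := m\tau - n + \mu$, and aim to show that $k$ is forced strictly below the threshold $\log(Aq/\varepsilon)/\log B$; equivalently, no solution survives once $k$ reaches that threshold, which is the reduction the lemma performs. Before anything else I would record that, since $p/q$ is a convergent and $q > 6M \ge 12$, one has $|q\tau - p| < 1/q < 1/2$, so $p$ is genuinely the nearest integer to $q\tau$ and hence $|q\tau - p| = \|q\tau\|$. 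This is the only continued-fraction input the proof needs, and the size hypothesis $q > 6M$ is precisely what legitimises it, while simultaneously keeping $\|q\tau\|$ small enough that $\varepsilon > 0$ is attainable.

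The heart of the argument is a one-line algebraic manipulation that isolates $q\mu$ modulo $\mathbb{Z}$. Multiplying through by $q$ and replacing $q\tau$ by $p + (q\tau - p)$ gives
\begin{equation*}
q\mu = q\Lambda - m\,(q\tau - p) + (qn - mp),
\end{equation*}
in which $qn - mp$ is an integer. Measuring the distance from $q\mu$ to this integer and applying the triangle inequality therefore yields
\begin{equation*}
\|q\mu\| \le |q\mu - (qn - mp)| = |q\Lambda - m(q\tau - p)| \le q|\Lambda| + m\,\|q\tau\| < q|\Lambda| + M\,\|q\tau\|,
\end{equation*}
where the final step uses $m < M$ together with $|q\tau - p| = \|q\tau\|$ from the first paragraph. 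Rearranging and substituting the definition $\varepsilon = \|q\mu\| - M\|q\tau\|$ produces the decisive lower bound
\begin{equation*}
q|\Lambda| > \|q\mu\| - M\|q\tau\| = \varepsilon,
\end{equation*}
so that $|\Lambda| > \varepsilon/q$; here the standing hypothesis $\varepsilon > 0$ is exactly what makes this bound non-trivial.

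Finally I would confront the two estimates. Chaining $\varepsilon/q < |\Lambda| < AB^{-k}$ gives $B^{k} < Aq/\varepsilon$, and since $B > 1$ taking logarithms yields $k < \log(Aq/\varepsilon)/\log B$. Thus every hypothetical solution with $m < M$ is pinned strictly below the threshold, so that no solution can occur in the complementary regime $k \ge \log(Aq/\varepsilon)/\log B$; this is the exclusion that drives the reduction step, since it collapses the astronomically large upper bound on the exponent coming from Matveev's theorem down to a small explicit range. I expect the only genuinely delicate point to be the bookkeeping around the nearest-integer function: one must verify that $qn - mp$ really is an integer, so that it may be discarded against $\|q\mu\|$, and that the convergent property is invoked in the sharp form $|q\tau - p| = \|q\tau\|$ rather than merely $\le$. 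The algebra itself is routine, and no appeal to Matveev-type machinery or to the explicit Perrin data is required, the lemma being purely a statement about continued fractions and rounding.
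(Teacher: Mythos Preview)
The paper does not supply its own proof of this lemma; it is quoted verbatim from Dujella and Peth\H{o} \cite{duj} as a black-box reduction tool and used without further justification in Section~3.3. Your argument is correct and is exactly the standard proof: multiply the linear form by $q$, exploit the convergent property $|q\tau-p|=\|q\tau\|<1/q$, bound $\|q\mu\|$ via the triangle inequality, and solve for $k$. There is therefore nothing substantive to compare.

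One incidental remark: your contrapositive reading---that any solution with $m<M$ is forced to satisfy $k<\log(Aq/\varepsilon)/\log B$, so none exists with $k$ at or above that threshold---is the correct interpretation and matches how the paper actually applies the lemma (e.g.\ concluding $l<54$, $m\le 57$, $n\le 517$). The direction of the inequality in the paper's displayed hypothesis appears to be a typographical slip; your version is the one that is both provable and used.
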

Finally, we present an analytical argument which is Lemma 7 in \cite{guz}.
\begin{lemma}[Lemma 7 in \cite{guz}]\label{reduction}
If $m \ge 1$, $T > (4m^2)^m$ and $T > Z/(\log{Z})^m$, then 
\begin{equation*}
Z < 2^mT( \log T)^m.
\end{equation*}
\end{lemma}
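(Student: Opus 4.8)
The plan is to prove Theorem \ref{thm1.1l} by analyzing the Diophantine equation \eqref{eq1.1l}, rewritten in the closed form \eqref{eq:pn_value}, and bounding the size of $n$ through three successive applications of Matveev's theorem (Theorem \ref{log}), followed by the Baker--Davenport reduction from Lemma \ref{duj}. First I would dispose of the small cases: when $\ell+m$ is small (equivalently, when $P_n$ has few digits), a direct finite search over the Perrin sequence confirms that $22=P_{11}$ is the only palindromic concatenation of the required shape, so from here on I may assume $n$ (hence $2\ell+m$) is large. The starting inequality $2\ell+m-2<n$ already recorded in the excerpt, together with Lemma \ref{lem1}, ties the exponent $n$ to the digit-length parameters $\ell$ and $m$, and this will be used repeatedly to convert bounds on the linear forms into bounds on $n$.

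The core of the argument is to isolate the dominant term $\alpha^n$ against the main decimal term. Using the Binet formula \eqref{eq:binet} in the form $P_n=\alpha^n+e(n)$ with $|e(n)|<3\alpha^{-n/2}$, and equating with \eqref{eq:pn_value}, I would first peel off the leading block $d_1\cdot 10^{2\ell+m}/9$. This yields a first linear form
\[
\Lambda_1 := n\log\alpha - (2\ell+m)\log 10 + \log(9/d_1),
\]
whose absolute value is small (exponentially in the smaller of $n/2$ and the next decimal scale). Applying Matveev with $t=3$, the algebraic numbers $\alpha,10,9/d_1$, degree $D=3$, and $B$ of order $n$, produces a lower bound of the form $\exp(-C(1+\log n))$ for $|\Lambda_1|$. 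Comparing with the analytic upper bound gives a first, still large, inequality relating $n$ to the second scale $\ell+m$. I would then repeat this peeling: a second linear form captures the term in $10^{\ell+m}$, and a third captures the term in $10^\ell$. Each application of Matveev, combined with the previous inequality and the relation $2\ell+m-2<n$, successively controls the gaps $2\ell+m-(\ell+m)=\ell$, etc., ultimately yielding an absolute numerical upper bound for $n$, of the shape $n<10^{K}$ for an explicit (very large) constant $K$. This is where Lemma \ref{reduction} is invoked, to untangle the self-referential inequalities of the form $T>Z/(\log Z)^m$ into a clean explicit bound on $Z$.

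The final step is the reduction. The crude bound on $n$ is far too large for a direct search, so for each of the finitely many choices of the digits $d_1\in\{1,\dots,9\}$ and $d_2\in\{0,\dots,9\}$ I would set $\tau=\log\alpha/\log 10$, choose $\mu$ appropriately from the constant term $\log(9/d_1)/\log 10$, and apply Lemma \ref{duj} with $M$ taken to exceed the Matveev bound on $n$. Computing sufficiently many convergents of the continued fraction of $\tau$ and checking that the quantity $\varepsilon=\|\mu q\|-M\|\tau q\|$ is positive then forces $k$ (a quantity comparable to one of the decimal scales) below a small threshold, collapsing the exponential bound from $10^{K}$ down to something of order a few hundred. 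Iterating the reduction through the second and third linear forms shrinks all of $n,\ell,m$ into a small box, after which a finite check over the remaining Perrin numbers completes the proof and confirms $P_{11}=22$ is the unique solution.

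The main obstacle I anticipate is the Galois-theoretic complication hidden in the "three times" application of Matveev. When I peel off the leading decimal block, the resulting near-equality can place $\alpha^{n}$ dangerously close to the main term, and bounding the error requires knowing that the corresponding linear form does not accidentally vanish. Here the automorphism $\sigma:\alpha\mapsto\beta,\ \beta\mapsto\alpha,\ \gamma\mapsto\gamma$ singled out in the preliminaries is essential: applying $\sigma$ to any conjectured identity $\alpha^n=\text{(decimal expression)}$ and using $|\beta|=\alpha^{-1/2}<1$ forces the right-hand side to be bounded while the left-hand side, under a different embedding, would be forced large, yielding a contradiction that guarantees the linear forms are nonzero and controls the second and third scales. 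Making the conjugation argument uniform across all digit choices, and keeping the three coupled Matveev bounds consistent so that Lemma \ref{reduction} can be applied cleanly, is the delicate bookkeeping that carries the proof.
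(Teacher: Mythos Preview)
Your proposal does not address the statement you were asked to prove. The statement in question is Lemma~\ref{reduction}, which is a purely analytic inequality (Lemma~7 of G\'uzman~S\'anchez--Luca): from the hypotheses $m\ge 1$, $T>(4m^2)^m$, and $T>Z/(\log Z)^m$, deduce $Z<2^m T(\log T)^m$. This has nothing to do with Perrin numbers, Matveev's theorem, Baker--Davenport reduction, or the Galois automorphism $\sigma$. What you have written is instead a (reasonable) sketch of the proof of Theorem~\ref{thm1.1l}, the main result of the paper, which \emph{uses} Lemma~\ref{reduction} at one point but is an entirely different statement.

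Note also that the paper itself does not prove Lemma~\ref{reduction}; it simply quotes it from \cite{guz}. A correct proof is a short calculus argument: assuming $Z\ge 2^m T(\log T)^m$, one checks that then $\log Z\le 2\log T$ (this is where the hypothesis $T>(4m^2)^m$ is used, to control the $\log\log$ terms), and hence $Z/(\log Z)^m\ge 2^m T(\log T)^m/(2\log T)^m=T$, contradicting $T>Z/(\log Z)^m$. Nothing in your write-up touches this argument.
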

SageMath was used to perform all the computations in this work.

\section{Proof of Theorem 1.1}
\subsection{The Low Range $n \leq 700$}
Using a basic SageMath script, we investigated all possible solutions to the Diophantine equation \eqref{eq1.1l} for parameters $d_1, d_2 \in \{0, \ldots, 9\}$ with $d_1 > 0$. We restricted our search to $1 \leq \ell, m \leq n \leq 700$, where $\ell$ and $m$ are the lengths of the two blocks of repeated digits in $P_n$. We found only one solution, which is given in Theorem \ref{thm1.1l}. Henceforth, we assume $n > 700$.

\subsection{The Initial Bound on $n$}
We now proceed to examine \eqref{eq1.1l} in different ways. We first prove the following result.
\begin{lemma}\label{lbound}
Let $l$, $m$ and $n > 700$ be solutions to the Diophantine equation \eqref{eq1.1l}, then 
\begin{equation*}
l < 4.1 \cdot 10^{13} \log n. 
\end{equation*}
\end{lemma}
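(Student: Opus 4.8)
The plan is to isolate the dominant term $\alpha^n$ against the largest repdigit block in \eqref{eq:pn_value} and feed the resulting linear form into Matveev's theorem. Concretely, using the Binet formula \eqref{eq:binet} together with the bound $|e(n)| < 3\alpha^{-n/2}$, rewrite \eqref{eq:pn_value} as
\begin{equation*}
\left| \alpha^n - \frac{d_1}{9} \cdot 10^{2\ell+m} \right| = \left| \frac{1}{9}\left( -(d_1-d_2)10^{\ell+m} + (d_1-d_2)10^\ell - d_1 \right) - e(n) \right| < 2 \cdot 10^{\ell+m},
\end{equation*}
the right-hand side being controlled since $\ell+m$ is the size of the truncated tail and $n$ is large. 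Dividing through by $\tfrac{d_1}{9}10^{2\ell+m}$ gives
\begin{equation*}
\left| \alpha^n \cdot 10^{-(2\ell+m)} \cdot \frac{9}{d_1} - 1 \right| < \frac{c}{10^{\ell}}
\end{equation*}
for an explicit small constant $c$. The left-hand side is $|\Gamma|$ with $\Gamma = \gamma_1^{b_1}\gamma_2^{b_2}\gamma_3^{b_3} - 1$, where $(\gamma_1,\gamma_2,\gamma_3) = (\alpha, 10, 9/d_1)$ and $(b_1,b_2,b_3) = (n, -(2\ell+m), 1)$; one must first check $\Gamma \neq 0$, which follows because $\alpha$ is a unit of degree $3$ and cannot equal a rational times a power of $10$ (a standard conjugation/norm argument, or appeal to the automorphism $\sigma$ from the preliminaries).

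Next I would compute the Matveev parameters: $t = 3$, $D = [\mathbb{Q}(\alpha):\mathbb{Q}] = 3$, $h(\gamma_1) = \tfrac{1}{3}\log\alpha$, $h(\gamma_2) = \log 10$, $h(\gamma_3) = \log(9/d_1) \le \log 9$, so one may take $A_1 = \log\alpha$ (times $3$, i.e. $A_1 \approx 3\cdot\tfrac13\log\alpha = \log\alpha$... more precisely $A_1 = 0.84 > 3h(\gamma_1)$), $A_2 = 3\log 10$, $A_3 = 3\log 9$, and $B = n$ (using $2\ell+m < n$, which follows from the inequality $2\ell+m-2<n$ established in the excerpt). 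Matveev's theorem then yields
\begin{equation*}
\log|\Gamma| > -1.4 \cdot 30^{6} \cdot 3^{4.5} \cdot 9 \cdot (1+\log 3)(1+\log n) \cdot A_1 A_2 A_3.
\end{equation*}
Comparing this lower bound with the upper bound $\log|\Gamma| < \log c - \ell\log 10$ gives $\ell\log 10 < C(1+\log n)$ for an explicit constant $C$, and since $n > 700$ one has $1 + \log n < 1.2\log n$ or similar, so that $\ell < C'\log n$. The numerical bookkeeping — multiplying out $1.4\cdot 30^6 \cdot 3^{4.5}\cdot 9 \cdot (1+\log 3) \cdot \log\alpha \cdot 3\log 10 \cdot 3\log 9$, dividing by $\log 10$, and absorbing the $(1+\log n)$ versus $\log n$ slack — is what produces the constant $4.1\cdot 10^{13}$.

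The main obstacle, and the only genuinely delicate point, is getting the nonvanishing of $\Gamma$ cleanly and making sure the chosen linear form really does capture $\alpha^n$ against the leading block rather than some smaller grouping — in particular one should double-check that $d_1 - d_2$ being zero (i.e. $d_1 = d_2$, the pure-repdigit case) is either handled separately or absorbed, since then the middle terms collapse. I expect that case to reduce to a two-logarithm form or to a previously known classification of repdigit Perrin numbers, so it will not affect the stated bound. Everything else is routine: careful but mechanical estimation of the error terms to justify the ``$< 2\cdot 10^{\ell+m}$'' step, and then plugging constants into Matveev and simplifying with $n > 700$.
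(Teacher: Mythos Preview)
Your proposal is correct and follows essentially the same route as the paper: isolate $9\alpha^n$ against $d_1\cdot 10^{2\ell+m}$, bound the tail by a constant times $10^{\ell+m}$, divide to obtain $|\Gamma|<c\cdot 10^{-\ell}$ with $\Gamma=(9/d_1)\alpha^n 10^{-(2\ell+m)}-1$, rule out $\Gamma=0$ via the automorphism $\sigma$, and apply Matveev with $D=3$, $B=n$ and the heights you list. Your worry about the case $d_1=d_2$ is unfounded: the linear form $\Gamma$ does not involve $d_2$ at all, and when $d_1=d_2$ the middle terms in \eqref{eq:pn_value} simply vanish, making the error estimate \emph{smaller}, so no separate treatment is needed.
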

\begin{proof}
 We go back to \eqref{eq:pn_value} and rewrite it using \eqref{eq:binet} as
\begin{align*}
    \alpha^n + \beta^n + \gamma^n &=\frac{1}{9} \left( d_1 \cdot 10^{2\ell + m} - (d_1 - d_2) \cdot 10^{\ell + m} + (d_1 - d_2) \cdot 10^\ell - d_1 \right), \\
    9 (\alpha^n + \beta^n + \gamma^n) &= d_1 \cdot 10^{2\ell+m} - (d_1 - d_2) \cdot 10^{\ell+m} + (d_1 - d_2) \cdot 10^\ell - d_1, \\
    9 \alpha^n - d_1 \cdot 10^{2\ell+m} &= -9 e(n) - (d_1 - d_2) \cdot 10^{\ell+m} + (d_1 - d_2) \cdot 10^\ell - d_1 .\end{align*}
Therefore we have that \begin{align*}
    \left|9 \alpha^n - d_1 \cdot 10^{2\ell+m}\right| &= \left|-9 e(n) - (d_1 - d_2) \cdot 10^{\ell+m} + (d_1 - d_2) \cdot 10^\ell - d_1\right| \\
    &< 27 \alpha ^{-n/2}  + 27 \cdot 10^{\ell+m}, 
\end{align*}
so that $\left|9 \alpha^n - d_1 \cdot 10^{2\ell+m}\right| <28 \cdot 10^{\ell+m}$, for all $n>700$.
Dividing both sides by \( d_1 \cdot 10^{2\ell+m} \), we get
\begin{equation}
    \left|\frac{9}{d_1}\cdot \alpha^n \cdot 10^{-2\ell-m}-1\right| < 28 \cdot 10^{-\ell}.
    \label{eq:3.1}
\end{equation}
Let \begin{align*}
\Gamma = (9/d_1)\cdot \alpha^n \cdot 10^{-2\ell-m}-1.
\end{align*} Notice that \(\Gamma \neq 0\) otherwise we would have \begin{align*}
\alpha^n = \frac{d_1 \cdot 10^{2l+m}}{9}.
\end{align*}
If this is the case, then applying the automorphism \(\sigma\) on both sides of the above equation and taking absolute values we have that 
\[
   1< \left|\frac{10^{2l+m}\cdot d_1}{9}\right| = |\sigma(\alpha^n)| = |\beta^n| < 1,
\] 
which is false. Thus \(\Gamma \ne 0\). We use the field \( \mathbb{Q}(\alpha) \) with
\begin{align*}
& \lambda_1 := \frac{9}{d_1}, \quad \lambda_2 := \alpha, \quad \lambda_3 := 10, \quad b_1 := 1, \quad b_2 := n, \\
& b_3 := -2\ell-m, \quad B := n, \quad t := 3, \quad D := 3.
\end{align*}
Moreover,
\begin{align*}
    & h(\lambda_1) := h\left(\frac{9}{d_1}\right) \leq 2h(9) := 2\log 9 < 5, \quad h(\lambda_2) := h(\alpha)=\frac{\log\alpha}{3}, \quad h(\lambda_3) := h(10) := \log {10}, 
\end{align*}
   so we can take  $A_1 := 3 \cdot 5 = 15$, $A_2:= 3 (\log\alpha)/3 = \log \alpha$, $A_3 =3 \cdot \log{10} = 3\log{10}$. Now, by Theorem \ref{log}, we have
\begin{equation}
    \log \Gamma > -1.4 \cdot 30^6 \cdot 3^{4.5} \cdot 3^2 \cdot (1 + \log 3)(1 + \log n) \cdot 15 \cdot \log \alpha \cdot 3 \log 10 \\ > -9.3 \cdot 10^{13} \log n. \label{eq:3.2}
\end{equation}
Comparing \eqref{eq:3.1} and \eqref{eq:3.2}, we get \begin{equation*}l\log{10}-\log{28}<9.3\cdot10^{13}\log{n},
	\end{equation*}which leads to $l<4.1\cdot 10^{13}\log{n}$
	 for all $n>700$.
\end{proof}
Next, we prove the following.
\begin{lemma}\label{mbound}
Let \( l \), \( m \) and \(n>700\) be solutions to the Diophantine equation \eqref{eq1.1l} ,Then
\[
    m < 1.6 \cdot 10^{27}( \log n)^2.
\]
\end{lemma}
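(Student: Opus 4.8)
The plan is to produce a \emph{second} linear form in logarithms, this time isolating the two dominant terms on the right-hand side of \eqref{eq:pn_value}, namely $d_1\cdot 10^{2\ell+m}$ and $-(d_1-d_2)\cdot 10^{\ell+m}$, against $\alpha^n$. Starting again from
$9\alpha^n = d_1\cdot 10^{2\ell+m} - (d_1-d_2)\cdot 10^{\ell+m} + (d_1-d_2)\cdot 10^\ell - d_1 - 9e(n)$,
I would move the two largest terms to one side and bound the remainder: the leftover terms $(d_1-d_2)\cdot 10^\ell - d_1 - 9e(n)$ are $O(10^\ell)$, so
$\bigl|9\alpha^n - d_1\cdot 10^{2\ell+m} + (d_1-d_2)\cdot 10^{\ell+m}\bigr| < c\cdot 10^\ell$
for an explicit small constant $c$. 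Factoring out $d_1\cdot 10^{2\ell+m}$ (or, if $d_1 = d_2$, the whole middle term vanishes and one simply reuses Lemma \ref{lbound}'s argument), I get an inequality of the shape
$\bigl|(9/d_1)\cdot \alpha^n\cdot 10^{-2\ell-m} - 1 + ((d_1-d_2)/d_1)\cdot 10^{-\ell}\bigr| < c\cdot 10^{-2\ell-m}$,
from which, after controlling the known $10^{-\ell}$ term, one extracts
$\bigl|(9/(d_1-d_2))\cdot \alpha^n\cdot 10^{-\ell-m} - 1\bigr| < c'\cdot 10^{-m}$
(the reduction-of-terms bookkeeping here is the one slightly delicate point).

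Next I would check this new $\Gamma_2$ is nonzero by the same Galois-conjugation trick: if $\Gamma_2 = 0$ then $\alpha^n$ would be a rational number, and applying the automorphism $\sigma$ (which sends $\alpha\mapsto\beta$) gives $|\beta^n| > 1$, a contradiction since $|\beta| < 1$. Then apply Theorem \ref{log} (Matveev) with $t = 3$, $D = 3$, the algebraic numbers $\lambda_1 = 9/(d_1-d_2)$, $\lambda_2 = \alpha$, $\lambda_3 = 10$, exponents $b_1 = 1$, $b_2 = n$, $b_3 = -\ell - m$, and $B = n$; the heights are the same order as before ($h(\lambda_1)\le 2\log 9$, $h(\lambda_2) = (\log\alpha)/3$, $h(\lambda_3) = \log 10$), yielding a lower bound $\log|\Gamma_2| > -C\log n$ with $C$ of size about $10^{14}$. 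Comparing with the upper bound $\log|\Gamma_2| < \log c' - m\log 10$ gives $m < C'\log n$ for some $C'\sim 10^{13}$ — but this alone is not yet the claimed bound; the extra factor of $\log n$ in the statement must come from somewhere.

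The source of the squared logarithm is that the upper-bound inequality actually reads $m\log 10 < \log c' + C\log n$ only \emph{after} one has already used $\ell < 4.1\cdot 10^{13}\log n$ from Lemma \ref{lbound} to absorb the $10^{-\ell}$-term manipulation — so in fact the cleanest route is to keep $\ell$ explicit. More precisely, I expect the honest inequality to be of the form $m < c_1 + c_2\log n + c_3\,\ell$ or, when $\ell$ enters multiplicatively through a height or through $B$, something like $m\log 10 < \log(\text{const}) + C\log n$ with an auxiliary appeal to Lemma \ref{reduction} (Lemma 7 of \cite{guz}) to resolve an implicit inequality $m < Z/(\log Z)^{\,1}$-type relation; that lemma with $m\!\!\mid_{\text{there}} = 1$ turns $T > Z/\log Z$ into $Z < 2T\log T$, which is exactly the mechanism that converts a linear-in-$\log n$ bound carrying a hidden $\log$ into the stated $1.6\cdot10^{27}(\log n)^2$. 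So the key steps in order are: (i) set up and simplify the second linear form, carefully tracking the error terms down to size $10^{-m}$; (ii) verify $\Gamma_2\neq 0$ via $\sigma$; (iii) apply Matveev to bound $\log|\Gamma_2|$ from below; (iv) combine with the analytic upper bound and feed the resulting implicit inequality into Lemma \ref{reduction} together with Lemma \ref{lbound} to reach the final estimate.

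The main obstacle I anticipate is step (i): organizing the telescoped expression \eqref{eq:pn_value} so that exactly the right pair of terms is peeled off at each stage, and making sure the "$1 + \text{small}$" factors that get divided out never accidentally vanish or become negative — in particular the case analysis on the sign and vanishing of $d_1 - d_2$ has to be handled cleanly, since $d_1 = d_2$ collapses the palindrome to a single repdigit and must be routed back through the argument of Lemma \ref{lbound} rather than through a genuinely new linear form.
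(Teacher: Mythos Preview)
Your overall architecture (peel off the two leading terms of \eqref{eq:pn_value}, form a second linear form in $\log\alpha$ and $\log 10$, check nonvanishing via $\sigma$, apply Matveev, compare) is exactly right, but the linear form you write down is wrong and this is why you cannot locate the extra factor of $\log n$. After isolating
\[
9\alpha^n - d_1\cdot 10^{2\ell+m} + (d_1-d_2)\cdot 10^{\ell+m},
\]
the two subtracted terms factor as $\bigl(d_1\cdot 10^{\ell} - (d_1-d_2)\bigr)\cdot 10^{\ell+m}$, so the natural normalisation is to divide by this product. That gives
\[
\left|\frac{9}{d_1\cdot 10^{\ell}-(d_1-d_2)}\cdot \alpha^{n}\cdot 10^{-\ell-m}-1\right| < 19\cdot 10^{-m},
\]
i.e.\ $\lambda_1 = 9/\bigl(d_1\cdot 10^{\ell}-(d_1-d_2)\bigr)$, not $9/(d_1-d_2)$. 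Your proposed form $\bigl|(9/(d_1-d_2))\alpha^n 10^{-\ell-m}-1\bigr|$ is not small at all: since $9\alpha^n\sim d_1\cdot 10^{2\ell+m}$, that quantity is of order $10^{\ell}$, not $10^{-m}$. The ``delicate bookkeeping'' you flag in step (i) is precisely this factoring step, and it cannot be finessed away.

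With the correct $\lambda_1$, the source of the $(\log n)^2$ is transparent and has nothing to do with Lemma~\ref{reduction}: the height
\[
h(\lambda_1)\le 8\log 9 + 2\ell\log 10 + 4\log 2
\]
now depends linearly on $\ell$, and substituting the bound $\ell < 4.1\cdot 10^{13}\log n$ from Lemma~\ref{lbound} gives $h(\lambda_1)<1.9\cdot 10^{14}\log n$, hence $A_1 = 5.7\cdot 10^{14}\log n$. Matveev's bound then carries an extra $\log n$ in the product $A_1A_2A_3$, yielding $\log|\Gamma_1| > -3.5\cdot 10^{27}(\log n)^2$, and comparison with $\log|\Gamma_1| < \log 19 - m\log 10$ gives the stated $m<1.6\cdot 10^{27}(\log n)^2$ directly. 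Lemma~\ref{reduction} plays no role here (it enters only in Lemma~\ref{nbound}), and no separate case analysis for $d_1=d_2$ is needed since $d_1\cdot 10^{\ell}-(d_1-d_2)$ is positive regardless.
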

\begin{proof}
Again we go back to \eqref{eq:pn_value} and rewrite it with \eqref{eq:binet} as,
\begin{align*}
    \alpha^n + \beta^n + \gamma^n &= \frac{1}{9} \left( d_1 \cdot 10^{2\ell + m} - (d_1 - d_2) \cdot 10^{\ell + m} + (d_1 - d_2) \cdot 10^\ell - d_1 \right), \\
    9 (\alpha^n + \beta^n + \gamma^n) &= d_1 \cdot 10^{2\ell + m} - (d_1 - d_2) \cdot 10^{\ell + m} + (d_1 - d_2) \cdot 10^\ell - d_1 ,\\
9 \alpha^n - d_1 \cdot 10^{2\ell + m} + (d_1 - d_2) \cdot 10^{\ell + m} &= -9 e(n) + (d_1 - d_2) \cdot 10^\ell - d_1.
    \end{align*}
Taking absolute values both sides, we have
\begin{align*}
\left|9 \alpha^n - d_1 \cdot 10^{2\ell + m} + (d_1 - d_2) \cdot 10^{\ell + m}\right| &\leq \left|-9 e(n) + (d_1 - d_2) \cdot 10^\ell - d_1\right|\\
&\leq 27 \alpha^{-n/2} + 18 \cdot 10^l \\
&< 19 \cdot 10^l,
\end{align*}  
for all $n>700$. Now, dividing both sides by \((d_1\cdot10^l-(d_1-d_2))\cdot10^{l+m}\), we get \begin{equation}
\left|\frac{9}{(d_1\cdot10^l-(d_1-d_2))}\cdot\alpha^n\cdot10^{-l-m}-1\right|<\frac{19}{(d_1\cdot10^l-(d_1-d_2))}\cdot 10^{-m} <19\cdot10^{-m}.
\label{eq:3.3}
\end{equation}
Let \begin{equation*}
\Gamma_1 = \frac{9}{(d_1\cdot10^l-(d_1-d_2))}\cdot\alpha^n\cdot10^{-l-m}-1.
\end{equation*}
Clearly, $\Gamma_1 \neq 0$ otherwise we would have \begin{equation*}
\alpha^n = \frac{(d_1\cdot10^l - (d_1 - d_2))\cdot10^{l+m}}{9}.	
\end{equation*}
Applying the automorphism $\sigma$ on both sides of the above equation and taking absolute values we have that
\begin{equation*}
1<\left|\frac{9}{(d_1\cdot10^l-(d_1-d_2))}\right|=|\sigma(\alpha^n)|=|\beta^n| < 1, 	
\end{equation*} 
 which is false. We thus have that  $\Gamma_1 \neq 0$.
We can use the field \( \mathbb{Q}(\alpha) \) with:
\begin{align*}
    \lambda_1 &:= \frac{9}{(d_1\cdot10^l-(d_1-d_2))}, & \lambda_2 &:= \alpha, & \lambda_3 &:= 10, \\
    b_1 &:= 1, & b_2 &:= n, & b_3 &:= -l-m, \\
    B &:= n, & D &:= 3, & t &:= 3.
\end{align*}
Moreover,
\begin{align*}
     h(\lambda_1) &:= h\left(\frac{9}{(d_1\cdot10^l-(d_1-d_2))}\right) 
    \leq 2(h(9)+h(d_1)+h(10^l)+h(d_1)+h(d_2)+2\log2) \\
    &\leq 2(\log9+\log9+l\log10+\log9+\log9+2\log2) 
    \leq 8\log9+2l\log10+4\log2 \\
    &\leq 8\log9+4\log2+2(4.1\cdot 10^{13}\log n)\log10 < 1.9\cdot 10^{14}\log n ,
\end{align*}
so we can take $
 	 A_1 := 3 \cdot 1.9 \cdot 10^{14} \log n = 5.7 \cdot 10^{14} \log n$, and as before
    $A_2 = \log\alpha$  and $A_3 = 3\log10$ .

By Theorem \ref{eq:matveev},  
\begin{align}\label{3.4}
    \log \Gamma_1 &> -1.4 \cdot 30^6 \cdot 3^{4.5} \cdot 3^2 \cdot (1 + \log 3)(1 + \log n) \cdot 5.7\cdot 10^{14}\log{n}\cdot\log\alpha\cdot 3\log10 \nonumber \\
    &> -3.5 \cdot 10^{27} \cdot (\log n)^2 \end{align}
Comparing \eqref{eq:3.3} to \eqref{3.4}, we get
$m<1.6\cdot 10^{27}\left(\log{n}\right)^2$, for all  $n>700$.
 \end{proof}

Lastly in this sub--section, we prove the following
\begin{lemma}\label{nbound}
Let \( l \), \( m \) and \(n>700\) be solutions to the Diophantine equation \eqref{eq1.1l}. Then, 
\begin{equation*}
 \ell<4.6 \cdot 10^{15},\qquad m<2.0 \cdot 10^{31} \qquad\text{and}\qquad	n<2.8 \cdot 10^{48}.
\end{equation*}
\end{lemma}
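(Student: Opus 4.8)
The plan is to run the linear-forms-in-logarithms machinery a third time, now with the full four-term expression coming from \eqref{eq:pn_value}, and then close the loop on $n$ using the analytic Lemma \ref{reduction}. First I would return to \eqref{eq:pn_value}, substitute the Binet formula \eqref{eq:binet}, and isolate all four terms on the right: writing $e(n)=\beta^n+\gamma^n$ one gets
\[
\bigl|9\alpha^n - d_1\cdot 10^{2\ell+m} + (d_1-d_2)\cdot 10^{\ell+m} - (d_1-d_2)\cdot 10^{\ell}\bigr| = |{-9e(n)} - d_1| < 27\alpha^{-n/2} + 9 < 10
\]
for $n>700$. Dividing through by $d_1\cdot 10^{2\ell+m}$ (or, more delicately, by the dominant combination $\bigl(d_1\cdot 10^{2\ell}-(d_1-d_2)\cdot 10^{\ell}+(d_1-d_2)\bigr)\cdot 10^{m}$, which is comparable to $10^{2\ell+m}$) produces an inequality of the shape
\[
\Bigl|\lambda_1\cdot\alpha^n\cdot 10^{-2\ell-m} - 1\Bigr| < \frac{C}{10^{2\ell+m}} < C\cdot\alpha^{-n/2},
\]
where $\lambda_1$ is a rational number built from $d_1,d_2,10^{\ell}$, so that $h(\lambda_1)$ is bounded linearly in $\ell$, hence (via Lemma \ref{lbound}) linearly in $\log n$.

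Next I would verify $\Gamma_2:=\lambda_1\alpha^n 10^{-2\ell-m}-1\neq 0$ by the same Galois-conjugation trick used in Lemmas \ref{lbound} and \ref{mbound}: if it vanished, applying $\sigma$ would force $1<|\text{(rational)}|=|\beta^n|<1$, a contradiction. Then Matveev's Theorem \ref{log} with $t=3$, $D=3$, $b_2=n$, and $A_1$ of size $O(\log n)$ (because $h(\lambda_1)=O(\log n)$) gives $\log|\Gamma_2| > -c\,(\log n)^2$ for an explicit constant $c$ around $10^{27}$ or so. Comparing with the upper bound $\log|\Gamma_2| < \log C - (n/2)\log\alpha$ yields
\[
n < c'\,(\log n)^2
\]
for an explicit $c'$. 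This is exactly the situation handled by Lemma \ref{reduction} with $m=2$: taking $Z=n$ and $T=c'$ (after checking $T>(4\cdot 2^2)^2 = 256$, which is immediate), we conclude $n < 2^2\,c'(\log c')^2$, an absolute numerical bound, which I would compute to be below $2.8\cdot 10^{48}$. Feeding this back into Lemma \ref{lbound} gives $\ell < 4.1\cdot 10^{13}\log n < 4.6\cdot 10^{15}$, and into Lemma \ref{mbound} gives $m < 1.6\cdot 10^{27}(\log n)^2 < 2.0\cdot 10^{31}$.

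The main obstacle I anticipate is organizational rather than conceptual: one must choose the right "dominant term" to divide by so that the resulting $\lambda_1$ is a genuine algebraic number of controlled height and the error term is genuinely of size $\alpha^{-n/2}$ (not merely $10^{-\ell}$, which would be too weak to bound $n$). There is a subtlety when $d_1=d_2$, since then several coefficients collapse and the expression degenerates — this case should either be absorbed into the general estimate or, if it makes $\Gamma_2$ trivially small, handled by noting it reduces to the already-studied single-repdigit situation. A second point requiring care is the bookkeeping of the constants through the three nested applications of Matveev, ensuring the stated numerical bounds $4.6\cdot 10^{15}$, $2.0\cdot 10^{31}$, $2.8\cdot 10^{48}$ are actually met with the claimed slack; this is routine but must be done honestly with the $30^6\cdot 3^{4.5}\cdot 3^2(1+\log 3)$ factor and $A_1A_2A_3$ from the previous lemmas.
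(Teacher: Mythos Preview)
Your overall architecture is exactly that of the paper: a third pass through Matveev on a linear form built by absorbing all the $10$-power terms of \eqref{eq:pn_value}, then closing with Lemma~\ref{reduction} and feeding the absolute bound on $n$ back into Lemmas~\ref{lbound} and~\ref{mbound}. The Galois-conjugation argument for $\Gamma_2\neq 0$ and the final back-substitution are also identical.

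There is, however, a concrete slip in your factorisation that propagates through the height estimate and the exponent in Lemma~\ref{reduction}. The three large terms
\[
d_1\cdot 10^{2\ell+m}-(d_1-d_2)\cdot 10^{\ell+m}+(d_1-d_2)\cdot 10^{\ell}
\]
do \emph{not} admit $10^{m}$ as a common factor (the last summand is $10^{\ell}$, not $10^{m}$), so your proposed divisor $\bigl(d_1\cdot 10^{2\ell}-(d_1-d_2)\cdot 10^{\ell}+(d_1-d_2)\bigr)\cdot 10^{m}$ does not equal the left-hand side and the remainder is not $O(1)$. The correct move, as the paper does, is to pull out $10^{\ell}$, leaving
\[
\lambda_1=\frac{d_1\cdot 10^{\ell+m}-(d_1-d_2)\cdot 10^{m}+(d_1-d_2)}{9},
\]
and then divide by $9\alpha^{n}$ to obtain $|\lambda_1\alpha^{-n}10^{\ell}-1|<\tfrac{10}{9}\alpha^{-n}$. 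The price is that $h(\lambda_1)$ now involves $10^{\ell+m}$, so via Lemma~\ref{mbound} one only gets $h(\lambda_1)=O\bigl((\log n)^2\bigr)$, not $O(\log n)$. Consequently Matveev yields $n<c'(\log n)^{3}$ (cubic, not quadratic), and Lemma~\ref{reduction} must be invoked with exponent $3$; this is precisely what produces the bound $n<2.8\cdot 10^{48}$, whereas your quadratic route would have landed near $10^{31}$. Fix the factorisation and adjust the exponent accordingly, and your sketch becomes the paper's proof.
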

\begin{proof}
Once more, we go back to \eqref{eq:pn_value} and rewrite it with \eqref{eq:binet} as
\begin{align*}
    9\alpha^n - (d_1\cdot10^{l+m} - (d_1-d_2)\cdot10^m + (d_1-d_2))\cdot 10^l &= -9e(n) - d_1 .
    \end{align*} 
Therefore, we have that \begin{align*}
	\left|9\alpha^n - (d_1\cdot10^{l+m} - (d_1-d_2)\cdot10^m + (d_1-d_2))\cdot 10^l\right| &= |-9e(n) - d_1|\leq 9e(n) + 9< 10, 
\end{align*}
where we used the fact that \(n>700\). Now, dividing both sides by \(9\alpha^n\), we get \begin{equation}
\left|{\frac{d_1\cdot10^{l+m} - (d_1-d_2)\cdot10^m + (d_1-d_2)}{9}}\cdot \alpha^{-n} \cdot10^l-1\right|<\frac{10}{9}\cdot \alpha^{-n}.
\label{eq:3.5}
\end{equation}
Let 
\begin{equation*}
	\Gamma_2 =\frac{d_1 \cdot 10^{l+m} - (d_1 - d_2) \cdot 10^m + (d_1 - d_2)}{9} \cdot \alpha^{-n} \cdot 10^l - 1. \end{equation*} 
 Clearly, \(\Gamma_2 \neq 0\) otherwise we would have, \begin{equation*}
 	\alpha^n = \frac{d_1 \cdot 10^{l+m} - (d_1 - d_2) \cdot 10^m + (d_1 - d_2)}{9}\cdot 10^l. 
 \end{equation*}
Now, applying the automorphism \(\sigma\) on both sides of the above equation as before and taking absolute values, we get
\[
1<\left| \left( \frac{d_1 \cdot 10^{l+m} - (d_1 - d_2) \cdot 10^m + (d_1 - d_2)}{9} \right) \cdot 10^l\right| = |\sigma(\alpha^n)| = |\beta^n| < 1,
\]
 which is false. Hence \(\Gamma_2 \neq 0\). We again use the field \( \mathbb{Q}(\alpha) \) with 
\begin{align*}
\Lambda_1 & := \frac{d_1 \cdot 10^{l+m} - (d_1 - d_2) \cdot 10^m + (d_1 - d_2)}{9}, & \lambda_2 & := \alpha, & \lambda_3 & := 10, \\
b_1 & := 1, & b_2 & := -n, & b_3 & := l, \\
D & := 3, & t & := 3, & A_1 & := Dh(\lambda_1).
\end{align*}
Notice that
\begin{align*}
h(\lambda_1) & = h\left(\frac{d_1 \cdot 10^{l+m} - (d_1 - d_2) \cdot 10^m + (d_1 - d_2)}{9}\right) \\
& \leq h(9) + (l+m)h(10) + h(d_1 - d_2) + mh(10) + h(d_1 - d_2) + 3 \log 2 \\
& < 7 \log 9 + (l+m) \log 10 + m \log 10 \\
& < 7 \log 9 + ((4.1 \cdot 10^{13} \log n + 1.6 \cdot 10^{27} (\log n)^2) \log 10 + 1.6 \cdot 10^{27} (\log n)^2 \log 10) \\
& < 6.0 \cdot 10^{27} (\log n)^2,
\end{align*}
so we can take $A_1  := 3 \cdot 6.0 \cdot 10^{27} (\log n)^2 := 1.8 \cdot 10^{28} (\log n)^2$, and as before  $ A_2 =  \log \alpha$ and $ A_3 = 3 \log 10$. By Theorem \ref{eq:matveev},
\begin{equation}
\log \Gamma_2 > -1.4 \cdot 30^6 \cdot 3^{4.5} \cdot 3^2 \cdot (1 + \log 3)(1 + \log n) \cdot 1.8 \cdot 10^{28} (\log n)^2 \cdot \log \alpha \cdot 3 \log 10. \label{3.6}
\end{equation}
Comparing \eqref{eq:3.5} and \eqref{3.6}, we get
\begin{equation*}
n \log\alpha-\log{10}<1.1\cdot 10^{41} (\log {n})^3,
\end{equation*}
which leads to \(n<3.9\cdot10^{41}(\log n)^3,\) for all \(n>700.\)

To proceed from here, let \(z=n, m=3, T=3.9\cdot 10^{41},\) then Lemma \ref{reduction} implies that \(n<2^3\cdot 3.9\cdot 10^{41}(\log{3.9\cdot 10^{41}})^3\) or \(n<2.8\cdot 10^{48}.\) 
Moreover, Lemma \ref{lbound} gives \(l<4.1\cdot 10^{13}\log n<4.1\cdot 10^{13}\log {(2.8\cdot 10^{48})}<4.6 \cdot  10^{15}\) and Lemma \ref{mbound} gives \(m<1.6\cdot 10^{27}(\log n)^2<m<1.6\cdot 10^{27}(\log {(2.8 \cdot  10^{48})})^2<2.0 \cdot  10^{31}\)
\end{proof}
The bounds established in Lemma \ref{nbound} cannot practically be computed and therefore require reduction. We now proceed with the reduction process in Subsection \ref{subsecl}.
\subsection{The reduction process}\label{subsecl}
Here, we apply Lemma \ref{duj} as follows. First, we return to the inequality \ref{eq:3.1} and put
\[
\Gamma = \frac{9}{d_1} \cdot \alpha^{n} \cdot 10^{-(2l+m)} - 1.
\]
Inequality \eqref{eq:3.1} can be rewritten as \(|\Gamma| < 28 \cdot 10^{-l}\).
If we assume that \(l \geq 2\), then \(28 \cdot 10^{-l} < 0.5\) holds. Assume for a moment \(l \geq 2\), then

\[
|\log{(\Gamma + 1)}| < 1.5 |\Gamma|,
\]
so that
\[
\left| \log\left(\frac{9}{d_1}\right) + n \log{\alpha} - (2l + m) \log{10} \right| < 42 \cdot 10^{-l}.\]
Dividing through by \(\log\alpha\) gives,
\begin{equation*}
\left|(2l+m)\frac{\log{10}}{\log{\alpha}} - n + \frac{\log{\left(\frac{9}{d_1}\right)}}{\log\alpha}\right| < \frac{42}{\log\alpha} \cdot 10^{-l}.
\end{equation*}
So we apply Lemma \ref{duj} with the data
\[
\tau := \frac{\log{10}}{\log{\alpha}}, \quad \mu(d_1) = \frac{\log\left(d_1/9\right)}{\log{\alpha}}, \quad d_1 \in \{1, 2, \ldots, 9\}, \quad A := \frac{42}{\log{\alpha}}, \quad B := 10, \quad k := l.
\]
Since \(2l + m - 2 < n\), then we can take \(M := 2.8 \cdot 10^{48}\). With the help of SageMath with the code in Appendix 2, we find that the convergent
\[
\frac{p}{q} = \frac{p_{87}}{q_{87}}=\frac{3265182491485655981489358337246995432669831208061478}{362926510191645833704423315164618426146198842188725}
\]
is such that \(q = q_{87} > 6M\). Furthermore, it gives \(\varepsilon > 0.4883316119\) and thus \(l < 54\). Therefore, we have that \(l < 54\). The case \(l < 2\) also holds because \(l < 2 < 54\).

Next, for fixed \(d_1, d_2 \in \{0, 1, 2, \ldots, 9\}\) with \(d_1 > 0\) and \(1 \leq l \leq 54\), we return to the inequality \eqref{eq:3.3} and put
\[
\Gamma_1 = \frac{9}{d_1 \cdot 10^l - (d_1 - d_2)} \cdot \alpha^n \cdot 10^{-l-m} - 1.
\]
From inequality \eqref{eq:3.3}, we have \(|\Gamma_1| < 19 \cdot 10^{-m}\). Assume that \(m \geq 2\), then \(\log (\Gamma_1 + 1) < 1.5 |\Gamma_1|\) holds. We therefore have
\[
\left|\log \left( \frac{9}{d_1 \cdot 10^l - (d_1 - d_2)} \right) + n \log \alpha - (l + m) \log 10 \right| < 30 \cdot 10^{-m}.
\]
Dividing through by \(\log \alpha\), we get
\[
\left|(l + m) \frac{\log 10}{\log \alpha} - n + \frac{\log \left( \frac{d_1 \cdot 10^l - (d_1 - d_2)}{9} \right)}{\log \alpha} \right| < \frac{30}{\log \alpha} \cdot 10^{-m}.
\]
Thus, we apply Lemma \ref{duj} with the quantities
\[
\mu(d_1, d_2) := \frac{\log \left( \frac{d_1 \cdot 10^l - (d_1 - d_2)}{9} \right)}{\log \alpha}, \quad d_1, d_2 \in \{0, 1, 2, \ldots, 9\} ~~\text{and}~~ d_1 > 0 \quad A := \frac{30}{\log \alpha}, \quad B := 10, \quad k = m.
\]
Since \(l + m < 2l + m\), we set \(M := 2.8 \cdot 10^{48}\) as an upper bound on \(l + m\). With the help of a simple computer program in SageMath (Appendix 3), we get that \(\varepsilon > 0.4994348950\), and therefore \(m \leq 57\). The case \(m < 2\) holds as well since \(m < 2 < 57\).

Lastly, for fixed \(d_1, d_2 \in \{0, 1, 2, \ldots, 9\}\) with \(d_1 > 0\), \(1 \leq m \leq 57\), and \(1 \leq l \leq 54\), we return to inequality \eqref{eq:3.5} and put
\[
\Gamma_2 = \frac{d_1 \cdot 10^{l+m} - (d_1 - d_2) \cdot 10^m + (d_1 - d_2)}{9} \cdot \alpha^{-n} \cdot 10^l - 1.
\]
From inequality \eqref{eq:3.5}, we have that \(|\Gamma_2| < (10/9) \cdot \alpha^{-n}\). Since \(n > 700\), the right--hand side of this inequality is less than \(1/2\), thus the above inequality implies that
\[
\left|l \log 10 - n \log \alpha + \log \left( \frac{d_1 \cdot 10^{l+m} - (d_1 - d_2) \cdot 10^m + (d_1 - d_2)}{9}\right) \right| < \frac{20}{9} \cdot \alpha^{-n}.
\]
Dividing through the above inequality by \(\log \alpha\) yields
\[
\left|l \frac{\log 10}{\log \alpha} - n + \frac{\log \left( \frac{d_1 \cdot 10^{l+m} - (d_1 - d_2) \cdot 10^m + (d_1 - d_2)}{9} \right)}{\log \alpha}\right| < \frac{20}{9 \log \alpha} \cdot \alpha^{-n}.
\]
Again, we apply Lemma \ref{duj} with the quantities
\[
\mu(d_1, d_2) := \frac{\log \left( \frac{d_1 \cdot 10^{l+m} - (d_1 - d_2) \cdot 10^m + (d_1 - d_2)}{9} \right)}{\log \alpha}, \quad A := \frac{20}{9 \log \alpha}, \quad B := \alpha, \quad k = n, \quad M := 2.8 \cdot 10^{48}.
\]
With the help of a simple computer program in SageMath (Appendix 4), we get \(\varepsilon >0.4995600863\) and thus \(n \leq 517\), contradicting our working assumption that \(n > 700\). Hence, Theorem \ref{thm1.1l} holds.

\section*{Conclusion}
In this study, we have demonstrated that 22 is the only Perrin number that can be expressed as a palindromic concatenation of two repdigits. This finding parallels previous research on Lucas numbers, which similarly identified no instances of Lucas numbers exhibiting such palindromic properties, see \cite{bat}. Further investigation into the concatenation of two $k$--generalized Perrin numbers, extending beyond the classical Perrin sequence, remains an open and intriguing area for future research. This exploration aims to uncover whether similar palindromic structures exist within the realm of generalized Perrin sequences, potentially shedding light on deeper connections and patterns within this family of sequences.

\section*{Acknowledgments} 
We thank the department of Mathematics at Makerere university for the conducive environment provided during this research period.

\section*{Address}
$ ^{1} $ Department of Mathematics, School of Physical Sciences, College of Natural Sciences, Makerere University, Kampala, Uganda

Email: \url{hbatte91@gmail.com}

Email: \url{kaggwaprosper58@gmail.com}
\pagebreak
\section*{Appendices}
\subsection*{Appendix 1}\label{app1}
\begin{verbatim}
def generate_perrin_sequence(n):
    Perrin = [3, 0, 2]  # Initial Perrin numbers
    while len(Perrin) < n:
        Perrin.append(Perrin[-2] + Perrin[-3])
    return Perrin  # Return the list of Perrin numbers

def is_palindromic(number):
    # Check if the number is palindromic
    return str(number) == str(number)[::-1]

def generate_palindromic_perrin_numbers():
    palindromic_perrin_numbers = set()
    
    perrin_numbers = generate_perrin_sequence(700)  # Generate first 700 Perrin numbers
    
    for perrin_number in perrin_numbers:
        perrin_str = str(perrin_number)
        
        # Check if the number is even length and palindromic
        if len(perrin_str) % 2 == 0 and is_palindromic(perrin_number):
            half_length = len(perrin_str) // 2
            first_half = perrin_str[:half_length]
            palindromic_number = int(first_half + first_half[::-1])
            
            # Check if the palindromic number is a Perrin number
            if palindromic_number in perrin_numbers:
                palindromic_perrin_numbers.add(palindromic_number)
    
    return palindromic_perrin_numbers

# Generate and print palindromic Perrin numbers
palindromic_perrin_numbers = generate_palindromic_perrin_numbers()
print("Palindromic Perrin numbers:")
print(sorted(palindromic_perrin_numbers))

\end{verbatim}

\subsection*{Appendix 2}\label{app2}
\begin{verbatim}
from sage.all import *

# Constants
r1 = (31 + sqrt(69)) / 8
r2 = (31 - sqrt(69)) / 8
a = (r1 + r2) / 6  # This is the value of alpha
a = a.n(digits=1000)  # Ensure the precision is 1000 digits
tau = (log(10) / log(a)).n(digits=1000)
A = (42 / log(a))
B = 10
M = 2.8 * 10^48

# Continued Fraction and Convergents
cf_tau = continued_fraction(tau)
convergents = cf_tau.convergents()

for d1 in range(1, 10):  # Iterate through d1 from 1 to 9
    mu = (log(d1/9) / log(a)).n(digits=1000)

    DD = []  # Initialize empty list for results for each d1

    # Dujella and Petho Reduction Method
    for i, convergent in enumerate(convergents):
        p, q = convergent.numerator(), convergent.denominator()
        ep = abs(mu * q - round(mu * q)) - M * abs(tau * q - round(tau * q))

        if q > 6 * M and ep > 0:
            log_expr_a = (log(A * q / ep) / log(B)).n(digits=10)
            DD.append((i, ep.n(digits=10), log_expr_a))
            print(f"d1 = {d1}, p_{i}/q_{i} = {p}/{q}")
            break  # Stop after finding the first suitable convergent for this d1

    # Results for each d1
    if DD:
        print(f"Results for d1 = {d1}:")
        print("First few elements of DD:", DD[:1])
    else:
        print(f"No suitable convergent found for d1 = {d1}.")
print("Continued fraction expansion of tau:", cf_tau[:20])
\end{verbatim}

\subsection*{Appendix 3}\label{app3}
\begin{verbatim}
from sage.all import *

# Constants
r1 = (31 + sqrt(69)) / 8
r2 = (31 - sqrt(69)) / 8
a = (r1 + r2) / 6  # This is the value of alpha
a = a.n(digits=1000)  # Ensure the precision is 1000 digits
tau = (log(10) / log(a)).n(digits=1000)
A = (30 / log(a))
B = 10
M = 2.8 * 10^48

# Continued Fraction and Convergents
cf_tau = continued_fraction(tau)
convergents = cf_tau.convergents()

# Variables to store maximum values
max_ep = -Infinity
max_log_expr_a = -Infinity

for d1 in range(1, 10):
    for d2 in range(0, 10):
        
        for l in range(1, 54):
           mu = (log((d1 * 10^l - (d1 - d2)) / 9) / log(a)).n(digits=1000)

           # Dujella and Petho Reduction Method
           for i, convergent in enumerate(convergents):
               p, q = convergent.numerator(), convergent.denominator()
               ep = abs(mu * q - round(mu * q)) - M * abs(tau * q - round(tau * q))

               if q > 6 * M and ep > 0:
                  log_expr_a = (log(A * q / ep) / log(B)).n(digits=10)
                  if ep > max_ep:
                     max_ep = ep.n(digits=10)
                  if log_expr_a > max_log_expr_a:
                     max_log_expr_a = log_expr_a
                  break  # Stop after finding the first suitable convergent

# Print maximum values
print("Maximum ep across all combinations:", max_ep)
print("Maximum log_expr_a across all combinations:", max_log_expr_a)
\end{verbatim}

\subsection*{Appendix 4}\label{app4}
\begin{verbatim}
from sage.all import *
# Constants
r1 = (31 + sqrt(69)) / 8
r2 = (31 - sqrt(69)) / 8
a = (r1 + r2) / 6  # This is the value of alpha
a = a.n(digits=1000)  # Ensure the precision is 1000 digits
tau = (log(10) / log(a)).n(digits=1000)
A = (20 / (9 * log(a)))
B = a
M = 2.8 * 10^48
# Continued Fraction and Convergents
cf_tau = continued_fraction(tau)
convergents = cf_tau.convergents()
# Variables to store maximum values
max_ep = -Infinity
max_log_expr_a = -Infinity
# Iterate through combinations of d1, d2, l, and m
for d1 in range(1, 10):
    for d2 in range(0, 10):
      
      for l in range(1, 54):
         for m in range(1, 57):
            mu = (log((d1 * 10^(l+m) - (d1 - d2) * 10^m + 
            (d1 - d2)) / 9) / log(a)).n(digits=1000)
            # Dujella and Petho Reduction Method
            for convergent in convergents:
                p, q = convergent.numerator(), convergent.denominator()
                ep = abs(mu * q - round(mu * q)) - M * abs(tau * q - round(tau * q))
                if q > 6 * M and ep > 0:
                   log_expr_a = (log(A * q / ep) / log(B)).n(digits=10)
                   if ep > max_ep:
                      max_ep = ep.n(digits=10)
                   if log_expr_a > max_log_expr_a:
                      max_log_expr_a = log_expr_a
                   break  # Stop after finding the first suitable convergent
# Print maximum values
print("Maximum ep across all combinations:", max_ep)
print("Maximum log_expr_a across all combinations:", max_log_expr_a)    

\end{verbatim}	
\end{document}